\newtheorem{theorem}[subsection]{Theorem}
\newtheorem{proposition}[subsection]{Proposition}
\newtheorem{lemma}[subsection]{Lemma}
\newtheorem{corollary}[subsection]{Corollary}
\theoremstyle{definition}
\theoremstyle{remark}
\newtheorem{remark}[subsection]{Remark}
\newcommand{\add}{\operatorname{add}}
\newcommand{\dbcat}{\mathsf{D}^{\mathsf b}}
\newcommand{\dsing}{\mathsf{D}_{\mathsf{sg}}}
\newcommand{\Ext}{\operatorname{Ext}}
\newcommand{\ges}{\geqslant}
\newcommand{\hh}{\operatorname{H}}
\newcommand{\perf}{\operatorname{perf}}
\newcommand{\lra}{\longrightarrow}
\newcommand{\fp}{\mathfrak{p}}
\newcommand{\fq}{\mathfrak{q}}
\newcommand{\mcA}{\mathcal{A}}
\newcommand{\mcT}{\mathcal{T}}
\newcommand{\mcS}{\mathcal{S}}
\newcommand{\Min}{\operatorname{Min}}
\newcommand{\rmod}{{\operatorname{mod}\,}}
\newcommand{\Reg}{{\operatorname{Reg}\,}}
\newcommand{\Spec}{{\operatorname{Spec}\,}}
\newcommand{\Supp}{{\operatorname{Supp}}}
\newcommand{\susp}{\mathsf{\Sigma}}
\newcommand{\syz}{{\Omega}\,}
\newcommand{\thick}{\operatorname{thick}}
\begin{document}

\title[Regular loci and generators]{Openness of the regular locus and \\ generators for module categories}

\author{Srikanth B. Iyengar}
\address{Department of Mathematics, University of Utah, Salt Lake City, UT 84112-0090, USA}
\email{iyengar@math.utah.edu}

\author{Ryo Takahashi}
\address{Graduate School of Mathematics, Nagoya University, Furocho, Chikusaku, Nagoya 464-8602, Japan}
\email{takahashi@math.nagoya-u.ac.jp}

\date{\today}

\subjclass[2010]{13D07, 13D09, 13D03, 16E30, 16E45}
\keywords{bounded derived category, generator, module category, regular locus, singularity category}

\begin{abstract}
This work clarifies the relationship between the openness of the regular locus of a commutative Noetherian ring $R$ and the existence of generators for the category of finitely generated $R$-modules, the corresponding bounded derived category, and for the singularity category of $R$.
\end{abstract}

\maketitle

\section{Introduction}
This note concerns the relationship between the openness of the regular locus of a commutative Noetherian ring $R$ and the existence of (classical) generators, in the sense of Bondal and Van den Bergh~\cite{Bondal/vandenBergh:2003}, of its bounded derived category, $\dbcat(\rmod R)$. The regular locus of $R$, denoted $\Reg R$, is the subset of $\Spec R$ consisting of prime ideals $\fp$ such that the local ring $R_{\fp}$ is regular, whereas a generator for the bounded derived category is an $R$-complex in $\dbcat(\rmod R)$ from which one can build every $R$-complex in $\dbcat(\rmod R)$ using finite direct sums, direct summands, and exact triangles. It is not hard to check that when such a generator exists, $\Reg R$ is an open subset of $\Spec R$; see  Lemma~\ref{lem:singcl}. We suspect that the converse does not hold, but are unable to find an example to validate this hunch. What we can prove is the following statement, proved in Section~\ref{sec:proof}.

\begin{theorem}
\label{thm:only}
The  conditions below are equivalent for a commutative Noetherian ring $R$.
\begin{enumerate}[\quad\rm(1)]
\item
$\Reg(R/\fp)$ contains a nonempty open subset for each $\fp\in\Spec R$.
\item
$\Reg(R/\fp)$ is open for each $\fp\in\Spec R$.
\item
The abelian category $\rmod (R/\fp)$ has a generator for each $\fp\in\Spec R$.
\item
The triangulated category $\dbcat(\rmod (R/\fp))$ has a generator for each $\fp\in\Spec R$.
\item
The triangulated category $\dsing(R/\fp)$ has a generator for each $\fp\in\Spec R$.
\end{enumerate}
When these hold, the abelian category $\rmod R$ and the triangulated categories $\dbcat(\rmod R)$, $\dsing(R)$, have generators.
\end{theorem}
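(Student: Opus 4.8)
\emph{Overall strategy.} I would prove the equivalences through the cycle
\[(1)\Rightarrow(3)\Rightarrow(4)\Rightarrow(5)\Rightarrow(2)\Rightarrow(1),\]
of which only the first implication is substantial. Here $(2)\Rightarrow(1)$ holds because $\Reg(R/\fp)$ contains the generic point of $\Spec(R/\fp)$, hence is nonempty, and if it is open it is itself the required nonempty open subset. For $(3)\Rightarrow(4)$: a short exact sequence of modules becomes an exact triangle, so if $\rmod S$ is the closure of a module $G$ under finite direct sums, direct summands and extensions, then $\thick(G)$ contains every module, hence — assembling any bounded complex from its cohomology via the truncation triangles — every object, so $\thick(G)=\dbcat(\rmod S)$. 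For $(4)\Rightarrow(5)$: the Verdier quotient $\dbcat(\rmod S)\to\dsing(S)$ is exact and essentially surjective, hence sends a generator to a generator. For $(5)\Rightarrow(2)$ (the argument underlying Lemma~\ref{lem:singcl}): writing $S=R/\fp$, a generator of $\dsing(S)$ may be replaced by the direct sum $G'$ of the cohomology modules of a generating complex, still a generator; then $\fq\in\Sing S$ iff $k(\fq)$ is not perfect over $S_\fq$ iff $G'_\fq$ is not perfect over $S_\fq$ — the nontrivial implication using $S/\fq\in\thick(G')$ — so $\Reg S=\bigcup_{n\ge0}\{\fq:\operatorname{pd}_{S_\fq}G'_\fq\le n\}$ is open, each term being open.

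\emph{Reduction and Noetherian induction for $(1)\Rightarrow(3)$.} I would first isolate the gluing lemma: \emph{if $S$ is Noetherian with minimal primes $\fp_1,\dots,\fp_n$ and each $\rmod(S/\fp_i)$ has a generator $G_i$, then $\bigoplus_i G_i$ generates $\rmod S$}; indeed every finitely generated $S$-module has a prime filtration whose subquotients $S/\fq$ each lie over some $\fp_i$, hence in the closure of $G_i$ under summands and extensions inside $\rmod S$. Applied to $S=R$, this lemma deduces the last assertion of the theorem from $(3)$, after which $\dbcat(\rmod R)$ and $\dsing(R)$ acquire generators by $(3)\Rightarrow(4)\Rightarrow(5)$. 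To prove $(1)\Rightarrow(3)$ itself I would fix a prime $\fp$, set $D=R/\fp$ (a domain, all of whose quotients still satisfy $(1)$), and run Noetherian induction on the closed subsets $Z\subseteq\Spec R$, proving ``$\rmod(R/\fq)$ has a generator for all $\fq\in Z$''. Every $\fq\in Z$ with $V(\fq)\subsetneq Z$ is covered by the inductive hypothesis for $V(\fq)$, so the only case left is $Z$ irreducible with generic point $\fp$; there the inductive hypothesis — via the gluing lemma applied to $D/I$, whose minimal primes are nonzero — provides a generator of $\rmod(D/I)$ for \emph{every nonzero ideal} $I\subseteq D$, and it remains to produce one for $\rmod D$.

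\emph{The key case.} We may assume $D$ is not a field. By $(1)$ for $D$ with the zero ideal, $\Reg D$ contains a nonempty open subset, which, $\Spec D$ being irreducible, contains $\Spec D_g$ for some nonzero non-unit $g$ (take any nonzero non-unit if $\Reg D=\Spec D$); so $D_g$ is regular. Let $H$ generate $\rmod(D/gD)$ (inductive hypothesis, $I=gD$). First, any finitely generated $D$-module $T$ killed by a power of $g$ lies in $\langle H\rangle$, via the filtration $T\supseteq gT\supseteq\cdots\supseteq g^kT=0$ with $D/gD$-module subquotients. Now take any finitely generated $M$, put $c=\operatorname{pd}_{D_g}M_g$ (finite, as $D_g$ is regular), and let $N=\Omega^cM$ be a sufficiently high syzygy of $M$ over $D$; then $N$ is torsion-free and $N_g$ is a projective $D_g$-module. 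Choosing a finitely generated $g$-torsion-free $D$-module $Q$ with $Q[1/g]$ complementary to $N_g$ in a free $D_g$-module, the module $L=N\oplus Q$ is $g$-torsion-free with $L_g$ free, so $g^mD^r\subseteq L\subseteq D_g^r$ for some $m,r$; the sequence $0\to g^mD^r\to L\to L/g^mD^r\to0$ has free first term $g^mD^r\cong D^r$ and $g$-power-torsion third term, so $L\in\langle D\oplus H\rangle$, whence $N$ (a summand of $L$) and then $M$ (built from $N$ by $c$ extensions with free middle terms) lie in $\langle D\oplus H\rangle$. Therefore $\rmod D=\langle D\oplus H\rangle$, completing $(1)\Rightarrow(3)$ and the theorem.

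\emph{The main obstacle.} It is the gluing carried out in the key case — fusing control of the regular open locus $\Spec D_g$ with a generator of the closed complement $V(g)$. I expect the conceptual core to be triangulated: localization at $g$ realizes $\dbcat(\rmod D_g)$ as the Verdier quotient of $\dbcat(\rmod D)$ by the subcategory $\dbcat_{V(g)}(\rmod D)$ of complexes with cohomology supported on $V(g)$, which in turn equals $\thick(H)$ by the filtration argument above; since $D_g$ is regular, $\dbcat(\rmod D_g)=\thick(D_g)$, and the standard fact that generation of a triangulated category is detected modulo a thick subcategory together with that subcategory then yields $\dbcat(\rmod D)=\thick(D\oplus H)$ with no homological bookkeeping at all. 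Two points need care in the abelian version: the replacement of ``free'' by ``finitely generated projective'' throughout (harmless, since all such $D$-modules lie in $\langle D\rangle$), and the finiteness of $\operatorname{pd}_{D_g}M_g$ — automatic when $D_g$ has finite global dimension, so that the case of infinite Krull dimension requires an extra argument reducing to finite dimension.
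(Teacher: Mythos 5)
Your argument is correct and follows the paper's overall architecture: the same cycle $(1)\Rightarrow(3)\Rightarrow(4)\Rightarrow(5)\Rightarrow(2)\Rightarrow(1)$, the same gluing lemma (Lemma~\ref{lem:modulo}), the same use of openness of the perfect locus for $(5)\Rightarrow(2)$, and the same reduction — by Noetherian induction — to a domain $D$ with $D_g$ regular and a generator $H$ of $\rmod(D/gD)$, the goal being to show $D\oplus H$ generates $\rmod D$. Where you genuinely diverge is in the key case. The paper puts a high syzygy $N=\syz^nM$ inside $\thick(D\oplus H)$ by noting that $\Ext^1_D(N,\syz N)$ is killed by a power $g^a$ of $g$ and invoking a pushout construction (\cite{Iyengar/Takahashi:2016}*{Remark 2.12}) that exhibits $N$ as a direct summand of $\syz_D(N/g^aN)$, with $N/g^aN$ a $g$-power-torsion module and hence generated by $H$. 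You instead complete $N$ to a generically free module $L=N\oplus Q$ and sandwich it between lattices, $g^mD^r\subseteq L\subseteq D^r$, so that $L$ is an extension of a $g$-power-torsion module by a free module; this avoids the Ext-annihilation bookkeeping entirely, at the cost of constructing the complement $Q$, and it matches nicely the Verdier-quotient heuristic you describe. Both routes are valid. The one point you flag but do not close is needed in either approach: the finiteness of $\operatorname{pd}_{D_g}M_g$ when the regular ring $D_g$ may have infinite Krull dimension. The paper settles this by citing \cite{Bass/Murthy:1967}*{Lemma 4.5}, which shows that every finitely generated module over a regular Noetherian ring has finite projective dimension; with that reference in place your ``extra argument reducing to finite dimension'' is unnecessary, but without it this step is a genuine gap.
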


Here $\dsing(R/\fp)$ is the singularity category of $R/\fp$ introduced by Buchweitz~\cite{Buchweitz:1987}, namely the Verdier quotient of $\dbcat(\rmod(R/\fp))$ by the full subcategory of perfect complexes; see \ref{ss:perfect} and \ref{ss:sing}.  One corollary of the theorem  is that the bounded derived category of any excellent ring has a generator. The result also gives another perspective on Nagata's criterion for the openness of the regular locus; see~Corollary~\ref{cor:nagata}.

\section{Singular loci and generators}
\label{sec:proof}
Throughout this work $R$ will be a commutative Noetherian ring with identity. We write $\rmod R$ for the (abelian) category of finitely generated $R$-module and $\dbcat(\rmod R)$ for its derived category, viewed as a triangulated category; see, for instance, \cite{Gelfand/Manin:2003}*{Chapter IV}. 

\subsection{Perfect complexes}
\label{ss:perfect}
An $R$-complex (that is to say, a complex of $R$-modules) is  \emph{perfect} if it is quasi-isomorphic to a bounded complex of finitely generated projective $R$-modules. 

\begin{remark}
\label{rem:truncation}
Given any $R$-complex $X$ in $\dbcat(\rmod R)$ and integer $s\ge \sup\{i\mid \hh_{i}(X)\ne0\}$, there is an exact triangle
\[
P \lra X \lra \susp^{s}M \lra
\]
in $\dbcat(\rmod R)$ where $M$ is a finitely generated $R$-module, and $P$ is a perfect complex with $P_{i}=0$ for $i\ge s$; moreover, one can choose $P$ such that $P_{n}=0$ for $n < \inf\{i\mid \hh_{i}(X)\ne0\}$. 

Indeed, replacing $X$ by a suitable projective resolution, one can assume that each $X_{i}$ is a finitely generated projective, and $X_{n}=0$ for $n<\inf \{i\mid \hh_{i}(X)\ne0\}$. The subcomplex $X_{<s}$ of $X$ is perfect and the quotient complex $X_{\ges s}\colonequals X/X_{<s}$ has homology only in degree $s$,  by the choice of $s$, hence it is quasi-isomorphic to $\susp^{s}\hh_{s}(X_{\ges s})$. The exact sequence of $R$-complexes
\[
0\lra X_{<s} \lra X \lra X_{\ges s}\lra 0
\]
induces an exact triangle in $\dbcat(\rmod R)$ with the desired properties.
\end{remark}

Let $\Spec R$ denote the spectrum of prime ideals of $R$, with the Zariski topology.  The support of an $R$-module $M$ is denoted $\Supp_{R}M$; when $M$ is finitely generated, this consists precisely of the prime ideals containing the annihilator ideal of $M$, and in particular a closed subset of $\Spec R$. In what follows, for any $R$-module $M$ and an integer $n\ges 0$, we write $\syz_R^nM$, or just $\syz^nM$ when the ring in question is clear, for an $n$-th syzygy of $M$.

For any $R$-complex $X$ we write $\perf_{R}X$ for the locus of prime ideals where $X$ is perfect:
\[
\perf_{R}X\colonequals \{\fp\in\Spec R\mid \text{the $R_{\fp}$-complex $X_{\fp}$ is perfect.}\}
\]
The observation below is well-known, at least for finitely generated modules. 

\begin{lemma}
\label{lem:ipd}
For any $X$ in $\dbcat(\rmod R)$, the subset $\perf_{R}X$ of\ $\Spec R$ is open.
\end{lemma}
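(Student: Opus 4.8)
The plan is to show that the complement of $\perf_R X$ is closed, i.e. stable under specialization, and then use Noetherianity to upgrade this to openness. So the key reduction is: a prime $\fp$ lies in $\perf_R X$ if and only if there is an element $s \notin \fp$ such that $X_s$ is perfect over $R_s$. The "if" direction is immediate since $X_{\fp} = (X_s)_{\fp}$ is a localization of a perfect complex. For the "only if" direction — which is the heart of the argument — I would proceed as follows.

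First I would reduce from an arbitrary $X$ in $\dbcat(\rmod R)$ to the case of a finitely generated module. Using Remark~\ref{rem:truncation}, choose $s \ge \sup\{i \mid \hh_i(X) \ne 0\}$ to get an exact triangle $P \to X \to \susp^s M \to$ with $P$ perfect and $M$ a finitely generated $R$-module. Since $P$ is already perfect, localizing the triangle shows that for any prime $\fq$ (or any multiplicatively closed set), $X_{\fq}$ is perfect if and only if $(\susp^s M)_{\fq} = \susp^s(M_{\fq})$ is perfect, i.e. if and only if $M_{\fq}$ has finite projective dimension over $R_{\fq}$. So it suffices to prove the statement for a single finitely generated module $M$: the set of primes $\fp$ with $\operatorname{pd}_{R_\fp} M_\fp < \infty$ is open.

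Now fix $\fp$ with $d := \operatorname{pd}_{R_\fp} M_\fp < \infty$. The idea is that the $d$-th syzygy becomes projective after a suitable localization. Take a projective resolution of $M$ over $R$ by finitely generated projectives and let $N = \syz_R^d M$ be the $d$-th syzygy module, which is finitely generated; then $N_\fp = \syz_{R_\fp}^d M_\fp$ is projective, hence free, over $R_\fp$. A finitely generated module that is free at a prime is free on a Zariski neighborhood of that prime: concretely, choosing a minimal generating set of $N_\fp$ and lifting to a map $R^{\oplus r} \to N$ which is an isomorphism at $\fp$, both the kernel and cokernel are finitely generated modules whose supports are closed and miss $\fp$, so there is $s \notin \fp$ inverting which makes $R^{\oplus r}_s \to N_s$ an isomorphism; thus $N_s$ is $R_s$-free. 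Then $\syz_{R_s}^d M_s = N_s$ is $R_s$-projective (being free, or one can just say: splicing a length-$d$ projective resolution tail onto the free module $N_s$ exhibits $\operatorname{pd}_{R_s} M_s \le d$), so $M_s$ is perfect over $R_s$, and hence so is $M_\fq$ for every $\fq$ in the basic open set $D(s) \ni \fp$. This shows $\perf_R X \supseteq D(s)$, so $\perf_R X$ is open.

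I do not expect a serious obstacle here; the only point requiring a little care is that localization and syzygies commute in the sense used above — i.e. that a localization of a projective resolution of $M$ is a projective resolution of $M_\fp$, and that $\syz$ is well defined up to projective summands so that $(\syz_R^d M)_\fp \cong \syz_{R_\fp}^d(M_\fp)$ up to projectives, which is enough since projective plus projective is still projective and does not affect finiteness of projective dimension. The "free on a neighborhood" step is standard (Nakayama plus Noetherian finiteness of supports) and is really the crux that makes the argument work.
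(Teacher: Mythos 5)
Your proof is correct, but it takes a genuinely different route than the paper for the key step. Both arguments begin with the same reduction via Remark~\ref{rem:truncation} to the case where $X$ is a finitely generated module $M$. From there, the paper uses the algebraic criterion that $\operatorname{pd}_{R_\fp} M_\fp \le n-1$ iff $\Ext^n_{R_\fp}(M_\fp, (\syz^n M)_\fp) = 0$, and so displays the complement of $\perf_R M$ as
\[
\Spec R \setminus \perf_R M \ =\ \bigcap_{n\ges 1}\Supp_R \Ext^n_R(M,\syz^n M),
\]
an intersection of closed sets (using that each $\Ext$-module is finitely generated), hence closed. You instead produce, for each $\fp\in\perf_R M$, an explicit basic open neighborhood $D(s)\subseteq \perf_R M$ by fixing a projective resolution, noting the $d$-th syzygy is free at $\fp$, and spreading this out to a neighborhood. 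Both are valid; the paper's version avoids choosing a point and gives a global description of the non-perfect locus, while yours is the standard pointwise ``free on a neighborhood'' argument and is perhaps more geometrically transparent. Your closing paragraph about syzygies commuting with localization is more cautious than needed, since you fix a single resolution and localize it, so there is no ambiguity up to projective summands to worry about.

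One small caveat about your framing: the opening sentence proposes to show the complement is ``stable under specialization, and then use Noetherianity to upgrade this to openness.'' That strategy does not work as stated --- a specialization-closed subset of a Noetherian spectrum need not be closed (e.g., the set of maximal ideals in $\Spec\mathbb{Z}$). Fortunately your actual argument does not rely on this: what you really prove is that every point of $\perf_R M$ admits a basic open neighborhood contained in $\perf_R M$, which is openness directly. So the execution is fine; only the initial framing sentence is misleading.
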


\begin{proof}
Set $s\colonequals \sup\{i\mid \hh_{i}(X)\ne0\}$ and let $M$ be the finitely generated $R$-module in the exact triangle in Remark~\ref{rem:truncation}. Since localization is an exact functor, for each $\fp\in\Spec R$, one gets an exact triangle
\[
P_{\fp} \lra X_{\fp} \lra \susp^{s}M_{\fp} \lra 
\]
in the bounded derived category of $R_{\fp}$. Since the $R_{\fp}$-complex $P_{\fp}$ is perfect, it follows that $X_{\fp}$ is perfect if and only if $M_{\fp}$ is perfect. It thus suffices to verify the desired claim when $X$ is a finitely generated $R$-module. In this case, $X$ is prefect if and only if it is of finite projective dimension.  For any integer $n\ge 1$, one has $\Ext^{n}_{R}(M,\syz^{n}M)=0$ if and only if the projective dimension of $M$ is $\le n-1$; see, for example, \cite{Iyengar/Takahashi:2016}*{Lemma 2.14}.  It follows that 
\[
\Spec R \setminus \perf M=\bigcap_{n\ges 1}\Supp_{R}\Ext^{n}_{R}(M,\syz^nM).
\]
It remains to observe that since $M$ is finitely generated, so is each $\Ext^{n}_{R}(M,\syz^{n}M)$.
\end{proof}

\subsection{Singularity category}
\label{ss:sing}
The perfect complexes form a thick subcategory of $\dbcat(\rmod R)$. The corresponding Verdier quotient is the \emph{singularity category} of $R$, introduced by Buchweitz~\cite{Buchweitz:1987} under the name `the stabilized derived category of $R$'. It follows from Remark~\ref{rem:truncation} that any object in $\dsing(R)$ is isomorphic to a finitely generated $R$-module. By construction an $R$-complex is zero in $\dsing(R)$ if and only if it perfect.

The singularity category is a measure of singularity of $R$, in that $\dsing(R)\equiv 0$ if and only if $R$ is regular, by which we mean the local ring $R_{\fp}$ is regular for each $\fp$ in $\Spec R$. 

Indeed, when $\dsing(R)\equiv0$, each finitely generated $R$-module is perfect, that is to say, of finite projective dimension, and this implies that $R$ is regular. Conversely, when $R$ is regular, it follows from \cite{Bass/Murthy:1967}*{Lemma~4.5} that the projective dimension of each finitely generated $R$-module is finite; equivalently, that each complex in $\dbcat(\rmod R)$ is perfect; see Remark~\ref{rem:truncation}.

\subsection{Generators}
\label{ss:generators}
Let $\mcA$ be an abelian category, for example, $\rmod R$. A \emph{thick} subcategory of $\mcA$ is a full subcategory $\mcS$ that is closed under direct summands, and has the two-out-of-three property for exact sequences: Given any exact sequence
\[
0\lra X\lra Y\lra Z\lra 0
\]
in $\mcA$, if two of $X,Y$, and $Z$ are in $\mcS$, then so is the third. Given an object $G$ in $\mcA$, we write $\thick_{\mcA}(G)$, or just $\thick(G)$ when the underlying abelian category is clear, for the smallest thick subcategory of $\mcA$ containing $G$; see, for example, \cite{Iyengar/Takahashi:2016}*{\S4.3} for a constructive definition of this category. The object $G$ is a \emph{generator} for $\mcA$ if $\thick(G)=\mcA$.

One has an analogous notion of a thick subcategory of a triangulated category, and of a generator of a triangulated category; see \cite{Bondal/vandenBergh:2003}*{Section 2.1}, where these are called ``classical generators''.  It is easy to verify that any generator for $\rmod R$ is a generator for $\dbcat(\rmod R)$, and that any generator for the latter is a generator for $\dsing(R)$.

\begin{lemma}
\label{lem:modulo}
If $\rmod (R/\fp)$ has a generator for each $\fp\in\Min R$, then so does $\rmod R$. The analogues for the bounded derived category, and the singularity category, also hold.
\end{lemma}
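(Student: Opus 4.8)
The plan is to reduce the statement about $R$ to statements about the finitely many minimal primes of $R$, using the standard filtration technique for Noetherian rings together with the two-out-of-three property of thick subcategories. First I would recall that since $R$ is Noetherian, it has only finitely many minimal primes $\fp_1,\dots,\fp_n$, and every finitely generated $R$-module $M$ admits a finite filtration $0=M_0\subset M_1\subset\cdots\subset M_t=M$ in which each quotient $M_j/M_{j-1}$ is isomorphic to $(R/\fq_j)$ for some prime $\fq_j$; moreover each such $\fq_j$ contains some minimal prime $\fp_{i}$. Thus, using the two-out-of-three property repeatedly, $\thick_{\rmod R}(\bigoplus_i R/\fp_i)$ would contain $M$ as soon as it contains each $R/\fq$ for $\fq$ ranging over the primes appearing in such filtrations; so the key reduction is: it suffices to show $R/\fq\in\thick_{\rmod R}(\bigoplus_i R/\fp_i)$ for every prime $\fq\supseteq\fp_i$.

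Next I would fix a minimal prime $\fp=\fp_i$ and argue by Noetherian induction on the poset of primes containing $\fp$ (equivalently, work in $R/\fp$ and induct on $\dim$ or on the closed subsets $V(\fq)$). Write $\bar R=R/\fp$ and let $\bar G$ be a generator for $\rmod\bar R$, which exists by hypothesis. Any generator $\bar G$ is, viewed as an $R$-module, annihilated by $\fp$, hence lies in $\thick_{\rmod R}(\bigoplus_i R/\fp_i)$ provided each $R/\fq$ with $\fq\supseteq\fp$ does — so I need to show every finitely generated $\bar R$-module, in particular each $R/\fq$ with $\fq\supsetneq\fp$, lies in $\thick_{\rmod\bar R}(\bar G)$, which is immediate since $\thick_{\rmod\bar R}(\bar G)=\rmod\bar R$. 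The point is then purely formal: $\thick_{\rmod\bar R}(\bar G)\subseteq\thick_{\rmod R}(\bar G)\subseteq\thick_{\rmod R}(\bigoplus_i R/\fp_i)$, where the first inclusion holds because a short exact sequence of $\bar R$-modules is in particular a short exact sequence of $R$-modules, and $\bar G$ itself lies in the target because $\bar G$ has a finite filtration with subquotients of the form $R/\fq$, $\fq\supseteq\fp$, each of which we must place in $\thick_{\rmod R}(\bigoplus_i R/\fp_i)$.

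So the genuine content is showing $R/\fq\in\thick_{\rmod R}(\bigoplus_i R/\fp_i)$ for all primes $\fq$. I would do this by Noetherian induction on $V(\fq)\subseteq\Spec R$: assume $R/\fq'\in\thick_{\rmod R}(\bigoplus_i R/\fp_i)$ for all $\fq'\supsetneq\fq$. If $\fq$ is minimal we are done by construction. Otherwise, pick a minimal prime $\fp_i\subseteq\fq$ and choose $x\in\fq$, $x\notin\fp_i$; then multiplication by $x$ on $R/\fp_i$ is injective, with cokernel $(R/\fp_i)/x(R/\fp_i)$ supported on $V(\fp_i+(x))\subsetneq V(\fp_i)$, a proper closed subset not containing $\fp_i$. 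That cokernel has a finite filtration with subquotients $R/\fq'$ where each $\fq'$ strictly contains $\fp_i$, hence (by a secondary Noetherian induction, or by interleaving the two inductions) each such $R/\fq'$ — and therefore the cokernel, and therefore $R/\fp_i$ adjusted — lands in the thick subcategory; running this down the chain from $\fp_i$ up to $\fq$ puts $R/\fq$ in $\thick_{\rmod R}(\bigoplus_i R/\fp_i)$. The bounded derived category and singularity category statements follow from the module case together with the already-noted facts that a generator for $\rmod S$ is a generator for $\dbcat(\rmod S)$ and for $\dsing(S)$, and that, for the derived and singular versions, one may likewise use the filtration of a module (now viewed via Remark~\ref{rem:truncation} and dévissage in the triangulated setting) to pass from generators of the $\dbcat(\rmod(R/\fp_i))$ to a generator of $\dbcat(\rmod R)$; I expect the main obstacle to be organizing the nested Noetherian inductions cleanly so that the dévissage genuinely terminates — i.e., making precise that "chasing up a chain of primes" is well-founded — rather than any single step being hard.
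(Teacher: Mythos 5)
There is a genuine gap: your key reduction asserts that it suffices to show $R/\fq\in\thick_{\rmod R}\bigl(\bigoplus_i R/\fp_i\bigr)$ for every prime $\fq$, which is the claim that $\bigoplus_{\fp\in\Min R}R/\fp$ is a generator of $\rmod R$. This is false in general, and notably it makes no use of the hypothesis that the $\rmod(R/\fp_i)$ have generators. For a concrete failure, take $R$ to be a local domain that is not regular (for example $R=k[[t^2,t^3]]$); then $\Min R=\{(0)\}$, the proposed generator is $R$ itself, and $\thick_{\rmod R}(R)$ consists precisely of the finitely generated modules of finite projective dimension, which is a proper subcategory of $\rmod R$. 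In particular the residue field is not in it. So the Noetherian induction you describe cannot possibly close; indeed at the induction step you pass from ``the cokernel $R/(\fp_i+(x))$ lies in the thick subcategory'' to ``the subquotients $R/\fq'$ of that cokernel lie in the thick subcategory,'' but that implication runs the wrong way — thick subcategories of $\rmod R$ (closed under summands and two-out-of-three) are not closed under passing to subquotients.

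The fix is to take $G=\bigoplus_i G_i$, where $G_i$ is a generator of $\rmod(R/\fp_i)$, as the candidate generator, rather than $\bigoplus_i R/\fp_i$. Once this is done, the prime filtration already finishes the proof without any secondary induction on chains of primes: for a finitely generated $R$-module $M$, choose a filtration $0=I_0\subseteq I_1\subseteq\cdots\subseteq I_n=R$ with $I_j/I_{j-1}\cong R/\fq_j$ for primes $\fq_j$, and fix a minimal prime $\fp_{i(j)}\subseteq\fq_j$. Then $\fq_jI_j\subseteq I_{j-1}$, so each subquotient $I_jM/I_{j-1}M$ is an $R/\fq_j$-module, hence in particular an $R/\fp_{i(j)}$-module, hence lies in $\thick_{\rmod(R/\fp_{i(j)})}(G_{i(j)})\subseteq\thick_{\rmod R}(G)$ because $G_{i(j)}$ generates $\rmod(R/\fp_{i(j)})$ and a short exact sequence of $R/\fp_{i(j)}$-modules is one of $R$-modules. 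The two-out-of-three property applied along the filtration $0=I_0M\subseteq\cdots\subseteq I_nM=M$ then places $M$ in $\thick_{\rmod R}(G)$. Your formal observations ($\thick_{\rmod\bar R}(\bar G)\subseteq\thick_{\rmod R}(\bar G)$, the reduction via filtrations, the passage to $\dbcat$ and $\dsing$ by the analogous d\'evissage) are all sound and reusable; the only change needed is the choice of candidate generator, which then dissolves the need for the nested inductions you were worried about.
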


\begin{proof}
There exists a filtration $0=I_0\subseteq I_1\subseteq\cdots\subseteq I_n=R$ by ideals such that for each $i$ one has $I_i/I_{i-1}\cong R/\fq_i$ with $\fq_i\in\Spec R$. For each $i$, fix a minimal prime $\fp_i$ of $R$ contained in $\fq_i$. Given an $R$-module $M$, the sequence $0=I_0M\subseteq I_1M\subseteq\cdots\subseteq I_nM=M$ of $R$-submodules gives rise to exact sequences
\[
0\lra I_{i-1}M \lra I_iM \lra M_i\lra 0
\]
of $R$-modules where $M_i$ is an $(R/\fp_i)$-module.

Assume that for  $1\leqslant i\leqslant n$ there exists an $(R/\fp_{i})$-module $G_i$ that generates $\rmod(R/\fp_{i})$. Set $G\colonequals \oplus_{i} G_{i}$.  Using the exact sequences above, an obvious inductive argument yields that $G$ is a generator for $\rmod R$.

A similar argument applies also in the case of the bounded derived category and the singularity category. 
\end{proof}

Next we call the definitions concerning the regular locus $\Reg R$ of the ring $R$. 

\subsection{The J-conditions}
Let $R$ be a commutative Noetherian ring. Following Matsumura~\cite{Matsumura:1980}*{\S32.B} we say that
\begin{enumerate}[\quad\rm(1)]
\item
$R$ is \emph{J-0} if $\Reg R$ contains a nonempty open subset of $\Spec R$.
\item
$R$ is \emph{J-1} if $\Reg R$ is open in $\Spec R$.
\item
$R$ is \emph{J-2} if it satisfies the following equivalent conditions.
\begin{enumerate}[\quad\rm(a)]
\item
Any finitely generated $R$-algebra is J-1.
\item
Any module-finite $R$-algebra is J-1.
\end{enumerate}
\end{enumerate}

We record a few remarks concerning these notions.

\begin{remark}
\label{rem:Ji}
It is easy to verify that a ring is J-0 if and only if $\Spec R\setminus \Reg R$ is not dense in $\Spec R$, if and only if there exists $\fp\in\Spec R$ and $f\in R\setminus\fp$ such that the ring $R_f$ is regular.

An artinian local ring that is not a field is not J-0 since the regular locus is empty. There is an example of a one-dimensional domain that is not J-0; see \cite{Hochster:1973}*{Example 1}. 

A regular ring is J-1, and so is a local isolated singularity of positive dimension. The J-1 condition does not imply J-0: the regular locus might be empty. However, every J-1 domain is J-0, for then the zero ideal belongs to $\Reg R$. On the other hand, there exists a $3$-dimensional local domain $R$ that is J-0 but not J-1; see \cite{Ferrand/Raynaud:1970}*{Proposition 3.5}.

When   $R$ is J-2, it is J-1. Excellent rings, Artinian rings, $1$-dimensional local rings and $1$-dimensional Nagata rings are J-2; see \cite{Matsumura:1980}*{Remark 32.B}.
\end{remark}

\begin{lemma}
\label{lem:singcl}
If an $R$-complex $G$  generates $\dsing(R)$, then $\Reg R=\perf G$ and $R$ is J-1.
\end{lemma}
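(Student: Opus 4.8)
The plan is to prove the two assertions in turn, relying on the structural results already available. First, for the equality $\Reg R = \perf G$: by \ref{ss:sing} a prime $\fp$ lies in $\Reg R$ precisely when $R_\fp$ is regular, which is equivalent to $\dsing(R_\fp) \equiv 0$. Localization induces a functor $\dsing(R) \to \dsing(R_\fp)$, so if $X$ is perfect at $\fp$ then $X_\fp \simeq 0$ in $\dsing(R_\fp)$. Conversely, since $G$ generates $\dsing(R)$, every $X$ in $\dbcat(\rmod R)$ lies in $\thick_{\dsing(R)}(G)$; localizing at $\fp \in \perf G$, where $G_\fp$ is perfect and hence zero in $\dsing(R_\fp)$, we find $X_\fp$ is zero in $\dsing(R_\fp)$ as well, i.e.\ $X_\fp$ is perfect. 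Taking $X$ to run over all finitely generated $R_\fp$-modules (each is the localization of a finitely generated $R$-module, which sits in $\dbcat(\rmod R)$), this shows every finitely generated $R_\fp$-module has finite projective dimension, so $R_\fp$ is regular; thus $\perf G \subseteq \Reg R$. The reverse containment $\Reg R \subseteq \perf G$ is the easier direction: if $R_\fp$ is regular then $G_\fp$ is perfect by the characterization in \ref{ss:sing}, so $\fp \in \perf G$.

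For the J-1 claim, once we know $\Reg R = \perf G$, openness is immediate from Lemma~\ref{lem:ipd}, which asserts that $\perf_R X$ is open for any $X$ in $\dbcat(\rmod R)$; here $X = G$, and one should note that $G$ may be replaced by a bona fide complex in $\dbcat(\rmod R)$ representing it, which is legitimate since by \ref{ss:sing} every object of $\dsing(R)$ is isomorphic to a finitely generated module, or one simply takes $G$ to be such a representative from the outset.

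I expect the main technical point to be handled carefully rather than being a genuine obstacle: namely, the compatibility of localization with the Verdier quotient, i.e.\ that the exact functor $\dbcat(\rmod R) \to \dbcat(\rmod R_\fp)$ sends perfect complexes to perfect complexes and therefore descends to $\dsing(R) \to \dsing(R_\fp)$, and that this descended functor carries $\thick_{\dsing(R)}(G)$ into $\thick_{\dsing(R_\fp)}(G_\fp)$. Once that is in place, the argument is a short chase through the definitions. The only other point worth stating explicitly is that a finitely generated $R_\fp$-module $N$ arises as $M_\fp$ for some finitely generated $R$-module $M$ — for instance, one may take $M$ to be an $R$-submodule of $N$ that generates it over $R_\fp$ — so that the hypothesis ``$G$ generates $\dsing(R)$'' can be applied to deduce the perfection of $N$ over $R_\fp$.
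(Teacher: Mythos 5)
Your argument is correct and follows essentially the same route as the paper: both proofs reduce $\Reg R = \perf G$ to the fact that $\dsing(R_\fp) \equiv 0$ precisely when $G_\fp$ is perfect, and then invoke Lemma~\ref{lem:ipd} for openness. The paper compresses your intermediate steps (localization preserves thick subcategories, every finitely generated $R_\fp$-module is a localization of one over $R$) into the single assertion ``$G_\fp$ generates $\dsing(R_\fp)$; this is straightforward to verify'' --- your proposal is in effect a verification of that claim.
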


\begin{proof}
When $G$ generates $\dsing(R)$,  the $R_{\fp}$-complex $G_{\fp}$ generates $\dsing(R_{\fp})$ for any $\fp\in \Spec R$; this is straightforward to verify.
This implies  $\dsing(R_{\fp})\equiv 0$ if and only if $G_{\fp}=0$ in $\dsing(R_{\fp})$, that is to say, if and only if the $R_{\fp}$-complex $G_{\fp}$ is perfect. Since $\dsing(R_{\fp})\equiv 0$ holds precisely when $R_{\fp}$ is regular,  the equality $\Reg R=\perf G$ follows. The latter is open, by Lemma \ref{lem:ipd}, and hence the ring $R$ is J-1.
\end{proof}

We are now prepared to prove Theorem~\ref{thm:only} stated in the Introduction.

\begin{proof}[Proof of Theorem~\ref{thm:only}]
For a start, (3) $\Rightarrow$ (4) $\Rightarrow$ (5) are clear, while  (5) $\Rightarrow$ (2) is by Lemma \ref{lem:singcl}, and  (2) $\Rightarrow$ (1) is clear, for $R/\fp$ is a domain; see Remark \ref{rem:Ji}.

(1) $\Rightarrow$ (3): Assume that (3) does not hold and let $\fp$ be a maximal element, with respect to inclusion, with the property that $\rmod R/\fp$ does not have a generator. Replace $R$ by $R/\fp$, we may then assume that  $\rmod R$ does not have a generator, but that $\rmod (R/\fq)$ does for each nonzero prime ideal $\fq$ in $R$.  Since $R$ is a domain, the J-0 condition entails the existence of a nonzero element $f\in R$ such that $R_f$ is regular; see Remark~\ref{rem:Ji}. If $R$ is regular, then $R$  would be a generator for $\rmod R$. We may thus assume that $f$ is not a unit. Since  $\rmod (R/\fq)$ has a generator for any prime ideal $\fq$ of $R$ containing $f$, Lemma \ref{lem:modulo} implies that  $\rmod(R/(f))$ has a generator, say $G$.

We claim that $R\oplus G$ generates $\rmod R$, which is a contradiction.

Indeed, let $M$ be a finitely generated $R$-module. Since the ring $R_{f}$ is regular, the projective dimension of the $R_{f}$-module $M_{f}$ is finite, say equal to $d$; this is again by \cite{Bass/Murthy:1967}*{Lemma~4.5}. For $n=\max\{1,d\}$ and  $N=\syz^n_{R}M$ one has
\[
\Ext_R^1(N,\syz N)_f\cong\Ext_{R_f}^{n+1}(M_f,(\syz N)_f)=0.
\]
Hence $f^a\cdot \Ext_R^1(N,\syz N)=0$ for an integer $a\ge 1$. It follows that there is an exact sequence
\[
0 \lra (0:_Nf^{a}) \lra N\oplus\syz_RN \lra \syz_R(N/f^{a}N) \lra 0\,,
\]
of $R$-modules; see, for example, \cite{Iyengar/Takahashi:2016}*{Remark 2.12}. Since $f^a$ is $N$-regular, for $n\ge 1$ and $R$ is a domain, one obtains that $N$ is isomorphic to a direct summand of $\syz_R(N/f^aN)$.

For each integer $i\geqslant1$ there is an exact sequence
\[
0 \lra f^{i-1}N/f^iN \lra N/f^iN \lra N/f^{i-1}N \lra 0\,.
\]
Since each $f^{i-1}N/f^iN$ is an $R/(f)$-module, it is generated by $G$ as an $R/(f)$-module, and hence also as an $R$-module. Using the exact sequences above, a standard induction on $i$ then yields that $N/f^{i}N$, and in particular $N/f^{a}N$, is generated by $G$ as an $R$-module. Therefore $\syz_R(N/f^aN)$ is generated by $R\oplus G$, as $R$-modules. Finally, since $N$ is an $n$th syzygy of $M$, it follows that $M$ itself is generated by $R\oplus G$, as claimed. 

This completes the proof of the claim of the equivalences of conditions (1)--(5).

The last assertion in the theorem is a consequence of Lemma \ref{lem:modulo}. 
\end{proof}

One consequence is Nagata's criterion for regularity; see \cite{Matsumura:1980}*{32.A}

\begin{corollary}
\label{cor:nagata}
If\,  $\Reg(R/\fp)$ contains a nonempty open subset for each $\fp$ in $\Spec R$, then the subset $\Reg R$  is open.
\end{corollary}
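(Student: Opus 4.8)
The plan is to observe that the hypothesis of the corollary is word-for-word condition~(1) of Theorem~\ref{thm:only} for the ring $R$ itself, and then to feed this into the concluding assertion of that theorem together with Lemma~\ref{lem:singcl}. So the first step is simply to note that ``$\Reg(R/\fp)$ contains a nonempty open subset for each $\fp\in\Spec R$'' is precisely statement~(1); hence by Theorem~\ref{thm:only} all of the equivalent conditions~(1)--(5) hold, and in particular its last sentence applies, so that $\dsing(R)$ has a generator, say $G$.

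The second and final step is to invoke Lemma~\ref{lem:singcl}: since $G$ generates $\dsing(R)$, that lemma yields $\Reg R=\perf_R G$, and $\perf_R G$ is open in $\Spec R$ by Lemma~\ref{lem:ipd}. Equivalently, $R$ is J-1, which is exactly the claim. One can also state this without ever naming $G$: by the theorem $\dsing(R)$ has a generator, whence $R$ is J-1 directly by Lemma~\ref{lem:singcl}.

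There is essentially no obstacle here beyond arranging the citations in the right order; the substance of Nagata's criterion has already been absorbed into the proof of Theorem~\ref{thm:only}, specifically the implication (1)$\Rightarrow$(3) and the reduction in Lemma~\ref{lem:modulo}. The one conceptual point worth flagging is that one should route through the singularity category rather than trying to deduce openness of $\Reg R$ directly from condition~(2): the latter only gives openness of $\Reg(R/\fp)$ for each prime $\fp$, and patching that information together over the irreducible components of $\Spec R$ when $R$ is not a domain is not formal --- that is precisely what Nagata's criterion asserts. Passing through ``$\dsing(R)$ has a generator $G$'' and the identity $\Reg R=\perf_R G$ makes the openness immediate.
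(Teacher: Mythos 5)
Your proof is correct and takes exactly the same route as the paper: invoke Theorem~\ref{thm:only} to get a generator for $\dsing(R)$, then apply Lemma~\ref{lem:singcl} (via Lemma~\ref{lem:ipd}) to conclude $\Reg R=\perf_R G$ is open. Your closing remark about why one should route through $\dsing(R)$ rather than try to patch openness over the components directly is an accurate description of where the real content of Nagata's criterion lives.
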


\begin{proof}
The hypothesis implies that $\dsing(R)$ has a generator, by  Theorem~\ref{thm:only}, and hence that $\Reg R$ is open, by Lemma  \ref{lem:singcl}.
\end{proof}

Theorem~\ref{thm:only} yields also a characterization of the J-2 property.

\begin{proposition}
\label{prp:J2}
The following conditions are equivalent.
\begin{enumerate}[\quad\rm(1)]
\item
The ring $R$ is J-2.
\item
Any module-finite $R$-algebra domain is J-0.
\item
For any module-finite $R$-algebra  (equivalently, and a domain) $A$, the abelian category $\rmod A$ has a generator.
\item
For any module-finite $R$-algebra  (equivalently, and a domain) $A$, the  triangulated category $\dbcat(\rmod A)$ has a generator.
\item
For any module-finite $R$-algebra  (equivalently, and a domain) $A$, the  triangulated category  $\dsing(A)$ has a generator.
\end{enumerate}
\end{proposition}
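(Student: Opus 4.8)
The plan is to deduce everything from Theorem~\ref{thm:only} applied to module-finite $R$-algebras, exploiting the fact that the five conditions in that theorem are phrased as "for each prime $\fp$" statements, which pass nicely through the operations of taking a quotient by a prime and of forming module-finite extensions. First I would dispatch the parenthetical "equivalently, and a domain" remarks: for a module-finite $R$-algebra $A$ and a prime $\fp$ of $A$, the quotient $A/\fp$ is again a module-finite $R$-algebra (via $R\to A\to A/\fp$) and is a domain, so any condition of the form "for every module-finite $R$-algebra that is a domain, \dots" is equivalent to the same condition with "domain" deleted, provided the condition in question is really a condition on all the quotients $A/\fp$. This is exactly the shape of conditions (2)--(5), so the reductions to the domain case are immediate.

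Next I would set up the core equivalence. Fix a module-finite $R$-algebra $A$; it is Noetherian, and its module-finite extensions are precisely the module-finite $R$-algebras containing $A$ (and every module-finite $R$-algebra that is also an $A$-algebra is module-finite over $A$). Applying Theorem~\ref{thm:only} to the ring $A$ gives: $\Reg(A/\fp)$ contains a nonempty open subset for each $\fp\in\Spec A$ $\iff$ $\rmod(A/\fp)$ has a generator for each $\fp$ $\iff$ $\dbcat(\rmod(A/\fp))$ has a generator for each $\fp$ $\iff$ $\dsing(A/\fp)$ has a generator for each $\fp$ $\iff$ $\Reg(A/\fp)$ is open for each $\fp$. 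Now "$\Reg(A/\fp)$ is open for each $\fp\in\Spec A$" says precisely that every module-finite $A$-algebra that is a domain is J-1, hence (since every module-finite $R$-algebra domain arises this way as $A$ ranges over module-finite $R$-algebras, taking $A$ itself) the conjunction over all module-finite $R$-algebras $A$ of these equivalent conditions is exactly what (2), (3), (4), (5) assert. Thus (2) $\iff$ (3) $\iff$ (4) $\iff$ (5), each being "for every module-finite $R$-algebra domain $A$, $\Reg A$ contains a nonempty open subset", equivalently "\dots is open".

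For (1) $\iff$ (2): by definition $R$ is J-2 iff every module-finite $R$-algebra is J-1, i.e.\ iff $\Reg A$ is open for every module-finite $R$-algebra $A$. One direction is trivial: if $R$ is J-2 then in particular every module-finite $R$-algebra domain is J-1, hence J-0 (a J-1 domain is J-0, since $(0)\in\Reg A$; see Remark~\ref{rem:Ji}), giving (2). For the converse, assume (2); then by the equivalence established above, $\Reg(A/\fp)$ is open for each $\fp\in\Spec A$ and each module-finite $R$-algebra $A$. Given an arbitrary module-finite $R$-algebra $A$, applying this with $\fp=(0)$ (more precisely, running the argument of the previous paragraph) shows $\Reg A$ is open, so $A$ is J-1; as $A$ was arbitrary, $R$ is J-2. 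The one point requiring a little care—and the step I expect to be the main obstacle—is keeping the quantifier bookkeeping honest: one must check that "module-finite $R$-algebra" is a transitive enough notion that ranging over all module-finite $R$-algebras $A$ and all primes $\fp$ of $A$ recovers exactly the class of module-finite $R$-algebra domains, and that Theorem~\ref{thm:only}'s "for each $\fp$" clauses line up with the J-1/J-0 conditions on these quotients. None of this is deep, but it is where a sloppy write-up could go wrong.
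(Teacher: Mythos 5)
Your proposal follows the same basic strategy as the paper: feed module-finite $R$-algebras into Theorem~\ref{thm:only} and use the J-0/J-1 dichotomy for domains from Remark~\ref{rem:Ji}. The reduction of the ``equivalently, and a domain'' parentheticals and the bi-implication $(2)\Leftrightarrow(3)\Leftrightarrow(4)\Leftrightarrow(5)$ are handled correctly. The place that needs tightening is the $(2)\Rightarrow(1)$ direction. You want to conclude that $\Reg A$ is open for an \emph{arbitrary} module-finite $R$-algebra $A$, and you wave at this with ``applying this with $\fp=(0)$ (more precisely, running the argument of the previous paragraph)''. But if $A$ is not a domain, $(0)$ is not a prime of $A$, so the $\fp=(0)$ device does not apply; and the ``argument of the previous paragraph'' only establishes the mutual equivalence of the five conditions of Theorem~\ref{thm:only} applied to $A$ — each of which is a statement about the quotients $A/\fp$, none of which is ``$\Reg A$ is open''.

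The missing link, which the paper supplies, is the passage from a statement about all $A/\fp$ to a statement about $A$ itself. One needs either the final assertion of Theorem~\ref{thm:only} (when (1)--(5) hold, $\dsing(A)$ has a generator) together with Lemma~\ref{lem:singcl}, or Corollary~\ref{cor:nagata} directly, or — as the paper does in its proof of $(5)\Rightarrow(1)$ — the combination of Lemma~\ref{lem:modulo} (from ``$\dsing(A/\fp)$ has a generator for each $\fp$'' to ``$\dsing(A)$ has a generator'') and Lemma~\ref{lem:singcl} (from there to ``$A$ is J-1''). Once one of these is cited explicitly, your argument closes and coincides in substance with the paper's.
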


\begin{proof}
$(1)\Rightarrow(2)$: This implication follows from the fact that any J-1 domain is J-0.

$(2)\Rightarrow(3)$: Let $A$ be a module-finite $R$-algebra. For any prime ideal $\fp$ of $A$, the residue ring $A/\fp$ is a module-finite $R$-algebra domain, and hence is J-0 by hypothesis. Theorem~\ref{thm:only} then implies that $\rmod A$ has a generator.

$(3)\Rightarrow(4)\Rightarrow(5)$: These implications are obvious.

$(5)\Rightarrow(1)$: The hypothesis is that the singularity category of any module-finite $R$-algebra domain has a generator. Let $A$ be a module-finite $R$-algebra. For any prime ideal $\fp$ of $A$, the ring $A/\fp$ is a module-finite $R$-algebra and a domain, and hence  $\dsing(A/\fp)$ has a generator. Therefore $\dsing(A)$ has a generator, by Lemma~\ref{lem:modulo}, and then Proposition \ref{lem:singcl} implies that $A$ is J-1. This proves that $R$ is J-2.
\end{proof}

\begin{remark}
\label{rem:todo}
For application, it is often important that $\dbcat(\rmod R)$ and $\dsing(R)$ have strong generators. Roughly speaking, a strong generator of a triangulated category $\mcT$ is a object $G$  such that, for some  integer $d$, each object in $\mcT$ can be built out of $\add G$, the additive closure of $G$, using $d$ exact triangles; see \cite{Bondal/vandenBergh:2003}*{Section~2.2} or \cite{Iyengar/Takahashi:2016}*{Section~7} for a  precise description. The reason for caring about them is that when they exist, under certain reasonable hypothesis on $\mcT$, cohomological functors on $\mcT$ are representable; see~\cite{Bondal/vandenBergh:2003}*{Theorem~1.3}.

It would be also of interest to find an analogue of Theorem~\ref{thm:only} dealing with strong generators. The existence of a generator does not imply that of a strong generator. For example, when $R$ is regular, $\dbcat(\rmod R)$ has a generator (namely, $R$ itself), but it has a strong generator if and only if the Krull dimension of $R$ is finite. One can prove that, at least when $R$ is reduced, the existence of a strong generator implies that $\dim R$ is finite. We do not know whether, when $\dim R$ is finite, the existence of a generator implies that of a strong generator. 
\end{remark}

\section*{Acknowledgments}
The authors are grateful to Shiro Goto, Kazuhiko Kurano, and Jun-ichi Nishimura for valuable suggestions. The first author was partially supported by the National Science Foundation, under grant DMS-1700985. The second author was partially supported by JSPS Grant-in-Aid for Scientific Research 16K05098 and JSPS Fund for the Promotion of Joint International Research 16KK0099. Finally, the authors thank the referee for reading the paper carefully and giving helpful comments.

\begin{bibdiv}
\begin{biblist}

\bib{Bass/Murthy:1967}{article}{
   author={Bass, Hyman},
   author={Murthy, M. Pavaman},
   title={Grothendieck groups and Picard groups of abelian group rings},
   journal={Ann. of Math. (2)},
   volume={86},
   date={1967},
   pages={16--73},
   issn={0003-486X},
   review={\MR{0219592}},
}

\bib{Bondal/vandenBergh:2003}{article}{
   author={Bondal, A.},
   author={van den Bergh, M.},
   title={Generators and representability of functors in commutative and
   noncommutative geometry},
   language={English, with English and Russian summaries},
   journal={Mosc. Math. J.},
   volume={3},
   date={2003},
   number={1},
   pages={1--36, 258},
   issn={1609-3321},
   review={\MR{1996800}},
}

\bib{Buchweitz:1987}{article}{
author={Buchweitz, R.-O.},
title={Maximal Cohen-Macaulay modules and Tate-cohomology over Gorenstein rings},
date={1987},
status={preprint},
eprint={https://tspace.library.utoronto.ca/handle/1807/16682},
}

\bib{Ferrand/Raynaud:1970}{article}{
   author={Ferrand, Daniel},
   author={Raynaud, Michel},
   title={Fibres formelles d'un anneau local noeth\'erien},
   language={French},
   journal={Ann. Sci. \'Ecole Norm. Sup. (4)},
   volume={3},
   date={1970},
   pages={295--311},
   issn={0012-9593},
   review={\MR{0272779}},
}

\bib{Gelfand/Manin:2003}{book}{
   author={Gelfand, Sergei I.},
   author={Manin, Yuri I.},
   title={Methods of homological algebra},
   series={Springer Monographs in Mathematics},
   edition={2},
   publisher={Springer-Verlag, Berlin},
   date={2003},
   pages={xx+372},
   isbn={3-540-43583-2},
   review={\MR{1950475}},
   doi={10.1007/978-3-662-12492-5},
}
\bib{Hochster:1973}{article}{
   author={Hochster, M.},
   title={Non-openness of loci in noetherian rings},
   journal={Duke Math. J.},
   volume={40},
   date={1973},
   pages={215--219},
   issn={0012-7094},
   review={\MR{0311653 (47 \#215)}},
}

\bib{Iyengar/Takahashi:2016}{article}{
   author={Iyengar, Srikanth B.},
   author={Takahashi, Ryo},
   title={Annihilation of cohomology and strong generation of module
   categories},
   journal={Int. Math. Res. Not. IMRN},
   date={2016},
   number={2},
   pages={499--535},
   issn={1073-7928},
   review={\MR{3493424}},
   doi={10.1093/imrn/rnv136},
}

\bib{Matsumura:1980}{book}{
   author={Matsumura, Hideyuki},
   title={Commutative algebra},
   series={Mathematics Lecture Note Series},
   volume={56},
   edition={2},
   publisher={Benjamin/Cummings Publishing Co., Inc., Reading, Mass.},
   date={1980},
   pages={xv+313},
   isbn={0-8053-7026-9},
   review={\MR{575344}},
}

\end{biblist}
\end{bibdiv}
\end{document}